\def \dis {\displaystyle}
\def \NN {\mathbb N}
\def \RR {\mathbb R}
\def \A {\mathcal{A}}
\def \B {\mathcal{B}}
\def \F {\mathcal{F}}
\def \L {\mathcal{L}}
\def \P {\mathcal{P}}
\def \ecart {\noalign{\medskip}}
\theoremstyle{definition}
\newtheorem{Th}{Theorem}[section]
\newtheorem{Lem}[Th]{Lemma}
\newtheorem{Cor}[Th]{Corollary}
\newtheorem{Rem}[Th]{Remark}
\def \refs #1{Section~\ref{#1}}
\def \refT #1{Theorem~\ref{#1}}
\title{Solvability of a fourth order elliptic problem \\ in a bounded sector, part I}
\author{Rabah Labbas, St\'ephane Maingot \& Alexandre Thorel \\ \ecart
\scriptsize R. L., S. M. \& A. T.: Normandie Univ, UNIHAVRE, LMAH, FR-CNRS-3335, ISCN, 76600 Le Havre, France. \\ \ecart 
\scriptsize rabah.labbas@univ-lehavre.fr, stephane.maingot@univ-lehavre.fr, alexandre.thorel@univ-lehavre.fr}
\date{}
\begin{document}

\maketitle

\begin{abstract}
The purpose of this article (composed of two parts) is the study of the generalized dispersal operator of a reaction-diffusion equation in $L^p$-spaces set in the finite conical domain $S_{\omega,\rho}$ of angle $\omega>0$ and radius $\rho>0$ in $\RR^2$. 

This first part is devoted to the behavior of the solution near the top of the cone which is completely described in the weighted Sobolev space $W^{4,p}_{3-\frac{1}{p}}(S_{\omega,\rho_0})$, $\rho_0 \leqslant \rho$, see \refT{Th principal}. \\
\textbf{Key Words and Phrases}: Fourth order boundary value problem, conical domain, weighted Sobolev spaces. \\
\textbf{2020 Mathematics Subject Classification}: 35B65, 35J40, 35J75, 35K35, 46E35. 
\end{abstract}

\section{Introduction}

This work required the use of many calculations and non-trivial checks linked, among other, to the theory of sums of linear operators. This is why we were forced to split this work into two more or less independent parts.

We consider the following bounded conical domain
\begin{equation*}
S_{\omega,\rho }=\left\{ (x,y)=(r\cos \theta ,r\sin \theta ):0<r<\rho \text{ and } 0<\theta <\omega \right\},
\end{equation*}
and its three edges: 
\begin{equation*}
\left\{ 
\begin{array}{lll}
\Gamma_{0} &=& (0,\rho)  \times \left\{ 0\right\} \\ 
\Gamma_{\omega} & = & \displaystyle\left\{ (r\cos \omega ,r\sin \omega )~:~0<r<\rho
\right\} \\
\Gamma_\rho & = & \displaystyle \left\{ (\rho\cos \theta ,\rho\sin \theta )~:~0<\theta<\omega\right\},
\end{array}\right.
\end{equation*}
where $\rho > 0$ is given and $\omega \in (0, 2\pi]$.

Let $k$ be a positive number, $f^*$ be a non-linear reaction function, $u_0$ be a given function and $n$ be the outwards normal unit vector to $\partial S_{\omega,\rho}$. 

We recall that the resolution of the generalized following reaction-diffusion problem
\begin{equation}\label{eq evolution}
\left\{\begin{array}{cll}
\dis\frac{\partial u}{\partial t} (t,x,y) & = & -\Delta^2_{(x,y)} u (t,x,y) + k \Delta_{(x,y)} u (t,x,y) + f^*(u(t,x,y)) \quad \text{in} ~ \RR_+\times S_{\omega,\rho} \\ \ecart
u(0,x,y) &=& u_0(x,y), ~(x,y) \in S_{\omega,\rho} \\ \ecart
u(t,\sigma) &=& \dfrac{\partial u}{\partial n} (t,\sigma) = 0, ~ (t,\sigma) \in \RR_+ \times \Gamma _{0}\cup \Gamma_{\omega } \\ \ecart
u(t,\sigma) &=& \dfrac{\partial^2 u}{\partial n^2} (t,\sigma) = 0, ~ (t,\sigma) \in \RR_+ \times \Gamma _{\rho},
\end{array}\right.
\end{equation}
needs, in a first step, to solve the following linear stationary problem 
\begin{equation}\label{Pb cone infini}
\left\{ \begin{array}{ll}
\Delta ^{2}u-k\Delta u=f &\in L^p(S_{\omega,\rho }),\quad p \in (1,+ \infty)  \\ \ecart 
u=\dfrac{\partial u}{\partial n}=0 & \text{on }\Gamma _{0}\cup \Gamma
_{\omega } \\ \ecart
u = \dfrac{\partial^2 u}{\partial n^2} = 0 & \text{on } \Gamma _{\rho}.
\end{array}\right. 
\end{equation}
This first study will allow us to determine, by an analogous calculus, the resolvent of the corresponding operator by solving the spectral problem 
\begin{equation*}
\left\{ \begin{array}{ll}
\Delta ^{2}u-k\Delta u - \lambda u = f & \in L^p(S_{\omega,\rho }) \\ \ecart 
u=\dfrac{\partial u}{\partial n}=0 & \text{on }\Gamma _{0}\cup \Gamma
_{\omega } \\ \ecart
u = \dfrac{\partial^2 u}{\partial n^2} = 0 & \text{on } \Gamma _{\rho},
\end{array}\right.
\end{equation*}
and then to estimate the resolvent operator in view to obtain the generation of an analytic semigroup as in Labbas, Maingot and Thorel \cite{LMT}. In the last step, the fixed point method can be applied to solve \eqref{eq evolution}.

In this first part, we will focus ourselves on the study of problem \eqref{Pb cone infini}. This work is a natural continuation of the one studied in Labbas, Maingot, Manceau and Thorel \cite{LMMT}. 

The originality of this work lies in the fact that the open set $S_{\omega,\rho}$ is conical whereas in the cited works above, it was cylindrical and in the fact that the spatial operator is composed by a linear combination of a laplacian and a bilaplacian operators.

Such problems, set in conical domains, model many concrete situations related to pollution for instance.

After some variables and functions changes, problem \eqref{Pb cone infini} will be written as a abstract sum of unbounded linear operators in a Banach space, see \eqref{Pb L1+L2}. Therefore, we will use the sum theory carried out in part II to solve \eqref{Pb L1+L2}, see Labbas, Maingot and Thorel \cite{Cone P2}.

This work is inspired by the one done by G.~Geymonat and P.~Grisvard in \cite{geymonat-grisvard} and P.~Grisvard in \cite{grisvard}, where the authors have considered, in a hilbertian framework, the following boundary problem
\begin{equation*}\label{Pb Grisvard}
\left\{ \begin{array}{ll}
\Delta^2 u = 0 &\text{in }S_{\omega } \\ \ecart 
u=\dfrac{\partial u}{\partial n}=0 & \text{on }\Gamma _{0}\cup \Gamma
_{\omega },
\end{array}\right. 
\end{equation*}
where $S_\omega$ is the infinite conical domain 
$$S_{\omega}=\left\{ (x,y)=(r\cos \theta ,r\sin \theta ):r>0 \text{ and } 0<\theta <\omega \right\}.$$

The author has proved that the solution of this problem is written as a "superposition" of particular solutions with separate variables of the form $\chi_{1,j}(r)\chi_{2,j}(\theta)$, for $j \in \NN$. The basic tools used are based on the compact operators belonging to the so-called Carleman class and the Fredholm determinants. 

Let us recall some known results concerning the biharmonic equation in a conical domain or in a Lipschitz domain. In Pipher and Verchota \cite{pipher}, the authors gave many estimates concerning the solution of the Dirichlet problem in $L^p$ for the biharmonic equation in Lipschitz domain. In Costabel, Dauge and Nicaise \cite{costabel}, the authors have used Mellin transformation in view to give an optimal characterization of the structure of weighted Sobolev spaces with nonhomogeneous norms on finite cones. In Barton and Mayboroda \cite{barton}, many results are given for general higher-order elliptic equations in non smooth-domains. In Tami \cite{tami}, the author has studied the following problem
\begin{equation*}
\left\{ \begin{array}{ll}
\Delta^2 u = f &\text{in }S_{\omega,1} \\ \ecart 
u=\Delta u =0 & \text{on } \partial S_{\omega,1},
\end{array}\right. 
\end{equation*}
where $ f \in L^2(S_{\omega,1})$. He has proved the two following results
\begin{enumerate}
\item If $\omega < \pi$, the variational solution is written, in the neighbourhood of the corner, as 
$$u_\omega = u_{1,\omega} + u_{2,\omega} + u_{3,\omega},$$
with $u_{1,\omega} \in H^{1+\frac{\pi}{\omega}-\varepsilon}$, $u_{2,\omega} \in H^{2+\frac{\pi}{\omega}-\varepsilon}$ and $u_{3,\omega} \in H^{4}$, for a small $\varepsilon > 0$.  

\item If $\omega = \pi$, in the neighbourhood of the corner, the variational solution verifies  
$$u_\pi \in H^4.$$
\end{enumerate}

This article is organized as follows. In \refs{Sect Statement of result}, we state our problem in polar coordinates and our main result in \refT{Th principal}. In \refs{Sect Reformulation} we reformulate our problem as a sum of linear unbounded operators. Then, \refs{Sect proof of main Th} is devoted to the proof of \refT{Th principal}.

\section{Statement of the main result}\label{Sect Statement of result}

We introduce the following polar variables function  
\begin{equation*}
v(r,\theta )=u(r\cos \theta ,r\sin \theta )=u(x,y).
\end{equation*}
It is known that the laplacian and the bilaplacian, in polar coordinates, are respectively written as
\begin{equation}\label{Lambda 1}
\Delta u = \frac{1}{r^{2}}\left[ \left( r\dfrac{\partial }{\partial r}\right)
^{2}+\dfrac{\partial ^{2}}{\partial \theta ^{2}}\right] v=\dfrac{\partial
^{2}v}{\partial r^{2}}+\frac{1}{r}\dfrac{\partial v}{\partial r}+\frac{1}{%
r^{2}}\dfrac{\partial ^{2}v}{\partial \theta ^{2}} := \Lambda_1 v,
\end{equation}
and 
\begin{equation}\label{Lambda 2}
\begin{array}{rcl}
\Delta ^{2}u &\hspace{-0.1cm}=&\hspace{-0.1cm} \dis\left( \dfrac{\partial ^{2}}{\partial r^{2}}+\frac{1}{r}\dfrac{\partial }{\partial r}+\frac{1}{r^{2}}\dfrac{\partial ^{2}}{\partial \theta^{2}}\right) ^{2}v \\ \ecart
&=&\hspace{-0.1cm}\dis \dfrac{\partial ^{4}v}{\partial r^{4}}+\frac{2}{r^{2}}\dfrac{\partial
^{4}v}{\partial r^{2}\partial \theta ^{2}} +\frac{1}{r^{4}}\dfrac{\partial ^{4}v}{\partial \theta ^{4}} + \frac{2}{r}\dfrac{\partial^{3}v}{\partial r^{3}} -\frac{2}{r^{3}}\dfrac{\partial ^{3}v}{\partial r\partial \theta ^{2}}-\frac{1}{r^{2}}\dfrac{\partial ^{2}v}{\partial r^{2}}+\frac{4}{r^{4}}\dfrac{\partial ^{2}v}{\partial \theta ^{2}}+\frac{1}{r^{3}}\dfrac{\partial v}{\partial r} \\ \ecart
&:=& \Lambda_2 v.
\end{array}
\end{equation}

\begin{Rem}
We can generalize this work to the dimension $n$ :
\begin{equation*}
\Delta u=\dfrac{\partial ^{2}v}{\partial r^{2}}+\frac{n-1}{r}\dfrac{\partial
v}{\partial r}+\frac{1}{r^{2}}\Delta ^{\prime }v,
\end{equation*}
where $\Delta ^{\prime }$ is the Laplace-Beltrami operator on the sphere.
\end{Rem}
We set
$$f(x,y)=f\left( r\cos \theta ,r\sin \theta\right) =g(r,\theta ).$$
Let $\gamma \in \RR$. 
In the case of the unbounded conical domain 
$$S_{\omega}=\left\{ (x,y)=(r\cos \theta ,r\sin \theta ):r>0 \text{ and } 0<\theta <\omega \right\},$$
we will use the following weighted Sobolev spaces of Kondrat'ev type
$$L^p_\gamma (S_{\omega}) = \left\{v \text{ measurable on }S_{\omega} : \int_{S_{\omega}}|v(r,\theta)|^p r^{p\gamma} ~dr \,d\theta < +\infty \right\},$$
and
\begin{equation}\label{Def Wmp generale}
W^{m,p}_\gamma (S_{\omega}) = \left\{v \in L^p_\gamma(S_{\omega}), \forall\, (i,j) \in \mathbb{N}^2 : 0 \leqslant i+j \leqslant m, ~ r^{\gamma+i+j}\frac{\partial^{i+j} v}{\partial r^i \partial \theta^j} \in L^p(S_{\omega}) \right\},
\end{equation}
see, for instance, Definition 2.1 in Costabel, Dauge and Nicaise \cite{costabel}.

In the sequel, the definitions above, for the bounded conical domain $S_{\omega,\rho}$, coincide with 
$$L^p_\gamma (S_{\omega,\rho}) = \left\{v \text{ measurable on }S_{\omega,\rho} : \int_{S_{\omega,\rho}}|v(r,\theta)|^p r^{p\gamma} ~dr \,d\theta < +\infty \right\},$$
and
\begin{equation}\label{Def Wmp}
W^{m,p}_\gamma (S_{\omega,\rho}) = \left\{v \in L^p_\gamma(S_{\omega,\rho}), \forall\, (i,j) \in \mathbb{N}^2 : 0 \leqslant i+j \leqslant m, ~ r^\gamma \frac{\partial^{i+j} v}{\partial r^i \partial \theta^j} \in L^p(S_{\omega,\rho}) \right\},
\end{equation}
since clearly \eqref{Def Wmp} implies \eqref{Def Wmp generale} in the bounded conical domain $S_{\omega,\rho}$.

Note that $f \in L^p(S_{\omega,\rho})$ means that $g \in L^p_{\frac{1}{p}} (S_{\omega,\rho})$. In fact, we have
$$\int_{S_{\omega,\rho}} |f(x,y)|^p dx dy = \int_{S_{\omega,\rho}} |g(r,\theta)|^p r dr d\theta,$$
where
$$g(r,\theta ) = f\left( r\cos \theta ,r\sin \theta\right), \quad 0 < r < \rho,~ 0 < \theta < \omega.$$
Then, in polar coordinates, problem \eqref{Pb cone infini} is written in the following form 
\begin{equation}\label{pb v}
\left\{\begin{array}{l}
\Lambda_2 v - k \Lambda_1 v = g \quad \text{in } S_{\omega,\rho} \\ \ecart
v(r,0) = v(r,\omega ) = \dfrac{\partial v}{\partial \theta }(r,0) = \dfrac{\partial v}{\partial \theta }(r,\omega) = 0 \\ 
v(\rho,\theta) = \dfrac{\partial^2 v}{\partial r^2}(\rho,\theta) = 0,
\end{array}\right.
\end{equation}
where $\Lambda_1$ and $\Lambda_2$ are given by \eqref{Lambda 1} and \eqref{Lambda 2}.

In this article, we will focus ourselves on the resolution of problem \eqref{pb v} to obtain the behavior of the solution $v$ to problem \eqref{pb v} in $L^p$-weighted spaces, in the neighborhood of the top of the cone. To this $v$ corresponds a solution $u_0$ to problem \eqref{Pb cone infini} by applying the inverse changes of variables and functions.

To solve problem \eqref{pb v}, we will use results given in Labbas, Maingot and Thorel \cite{Cone P2} and to this end, we need to consider 
$$\tau = \min_{j\geqslant 1}\left|\text{Im}(z_j)\right| > 0,$$
where $(z_j)_{j\geqslant 1}$ are the solutions of the following transcendent equation
$$\left(\sinh(z) + z\right) \left(\sinh(z) - z\right) = 0, \quad \text{with} \quad\text{Re}(z)>0.$$
Actually, according to Fädle \cite{fadle}, we have
$$\tau \simeq 4.21239.$$
We assume that 
\begin{equation}\label{hyp inv sum 0}
\omega\left(3 - \frac{2}{p}\right) < \tau.
\end{equation}
Recall that $\dis 3-\frac{2}{p}$ is exactly the Sobolev exponent of the space $W^{3,p}$ in two variables. 

Let us remark that we have two cases: 
\begin{enumerate}
\item If $\dis 0 < \omega \leqslant \frac{\tau}{3} \simeq 0.45\pi$, then \eqref{hyp inv sum 0} is satisfied for all $p \in (1,+\infty)$.

\item If $\dis \frac{\tau}{3} < \omega < \tau \simeq 1.34\pi$, then \eqref{hyp inv sum 0} is satisfied for $\dis 1 < p < \frac{2\omega}{3\omega - \tau}$.
\end{enumerate}
Our main result is the following:
\begin{Th}\label{Th principal}
Let $g \in L^p_{\frac{1}{p}}(S_{\omega,\rho})$ with $p$ satisfying assumption \eqref{hyp inv sum 0}. Then, there exists $\rho_0 > 0$ such that problem \eqref{pb v} has a unique solution $v$ satisfying
$$v \in W^{4,p}_{3 - \frac{1}{p}}(S_{\omega,\rho_0}).$$
\end{Th}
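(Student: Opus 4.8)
The plan is to recast problem \eqref{pb v} as an abstract sum of two linear operators and then invoke the operator-sum theory of Part II \cite{Cone P2}. First I would perform the Euler change of variable $r = e^{t}$, which sends the bounded sector onto the half-strip $(-\infty, \ln\rho) \times (0,\omega)$ and turns the radial Euler operator $r\partial_r$ into $\partial_t$. Using \eqref{Lambda 1}--\eqref{Lambda 2}, the principal part $\Lambda_2$ becomes, after multiplication by $r^4 = e^{4t}$, the constant-coefficient operator $\bigl[(\partial_t-2)^2+\partial_\theta^2\bigr]\bigl[\partial_t^2+\partial_\theta^2\bigr]$, while the lower-order term $-k\Lambda_1$ produces a factor $e^{2t}=r^2$. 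Conjugating by a suitable power weight converts the weighted spaces $L^p_{\frac1p}$ and $W^{4,p}_{3-\frac1p}$ into unweighted $L^p$-type spaces on the strip. Following \refs{Sect Reformulation}, the resulting principal operator is then recast as an abstract sum $L_1+L_2$, where $L_1$ encodes the angular structure together with the clamped conditions $v=\partial_\theta v=0$ at $\theta=0,\omega$ and $L_2$ encodes the radial structure together with $v=\partial_r^2 v=0$ at $r=\rho$; this is equation \eqref{Pb L1+L2}.

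Next I would apply the sum theorem of \cite{Cone P2} to the principal part $L_1+L_2$. This requires checking that $L_1$ and $L_2$ belong to the admissible class — closedness, sectoriality with suitable angles, boundedness of imaginary powers, and commuting resolvents — and, crucially, that their spectra are separated, so that $L_1+L_2$ is boundedly invertible with optimal regularity in $D(L_1)\cap D(L_2)$. The spectral separation is exactly where hypothesis \eqref{hyp inv sum 0} enters: the spectrum of the angular operator is governed by the Fadle--Papkovich eigenvalues $(z_j)_{j\ge 1}$, the roots of $(\sinh z+z)(\sinh z-z)=0$ with $\mathrm{Re}(z)>0$ (see \cite{fadle}), while the weight conjugation places the spectrum of the radial operator on a line whose position is controlled by $\omega\bigl(3-\tfrac2p\bigr)$. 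The inequality $\omega\bigl(3-\tfrac2p\bigr)<\tau=\min_{j\ge 1}|\mathrm{Im}(z_j)|$ guarantees that no eigenvalue $z_j$ meets this line, so the two spectra do not collide and the sum is invertible.

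It then remains to incorporate the lower-order term $-k\Lambda_1$ and to pass to a bounded sector. Since in the transformed variables this term carries the factor $r^2=e^{2t}$, it is subordinate to the principal part on a small sector: restricting to $S_{\omega,\rho_0}$ with $\rho_0$ small makes its operator norm, relative to the inverse of $L_1+L_2$, strictly less than one. A Neumann series, or equivalently a fixed-point contraction, then shows that the full operator remains boundedly invertible on $S_{\omega,\rho_0}$, which is precisely why the conclusion is stated on $S_{\omega,\rho_0}$ rather than on all of $S_{\omega,\rho}$. Invertibility yields both existence and uniqueness, and undoing the change of variable and the weight conjugation identifies the abstract space $D(L_1)\cap D(L_2)$ with $W^{4,p}_{3-\frac1p}(S_{\omega,\rho_0})$, giving \refT{Th principal}.

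I expect the main obstacle to be the verification of the hypotheses of the sum theorem — in particular establishing the boundedness of imaginary powers and the commutation of resolvents for $L_1$ and $L_2$, together with the spectral analysis of the angular operator that produces the eigenvalues $z_j$ and pins the separation condition to \eqref{hyp inv sum 0}. A secondary technical point is the exact bookkeeping of the conjugation weight, so that the clamped and radial boundary conditions are faithfully encoded in $D(L_1)$ and $D(L_2)$ and so that the target index $3-\tfrac1p$ comes out correctly; the cancellations forced by the clamped conditions at $\theta=0,\omega$ are what make $\Lambda_2$ map $W^{4,p}_{3-\frac1p}$ into $L^p_{\frac1p}$ despite the singular coefficients $r^{-2},r^{-4}$ appearing in \eqref{Lambda 2}.
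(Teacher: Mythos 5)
Your overall strategy coincides with the paper's: an Euler/Mellin change of variable, conjugation by the exponential weight with exponent $\nu=3-\frac{2}{p}$, reformulation as a sum of closed operators, invocation of the sum theorem of \cite{Cone P2} (to which all the hard spectral work --- sectoriality, commutation, the Fädle roots $z_j$ and the role of \eqref{hyp inv sum 0} --- is deferred, exactly as you propose), and a smallness-of-$\rho_0$ argument absorbing the perturbation coming from $-k\Lambda_1$. Two steps, however, are asserted rather than supplied, and each conceals a genuine idea. First, the passage from the factorized operator $\bigl[(\partial_t-2)^2+\partial_\theta^2\bigr]\bigl[\partial_t^2+\partial_\theta^2\bigr]$ to an abstract sum $L_1+L_2$ is not automatic: this is a \emph{product} of two operators, each of which mixes $t$ and $\theta$, and the clamped conditions $v=\partial_\theta v=0$ at $\theta=0,\omega$ do not distribute over the two factors. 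The paper's device is to set $w=v/r$ and introduce the vector unknown $\Psi=\bigl(w,(r\partial/\partial r)^2w\bigr)$, which reduces the equation to a second-order abstract ODE $\Phi''-\mathcal{A}\Phi=F$ in $X=W^{2,p}_0(0,\omega)\times L^p(0,\omega)$, with $\mathcal{A}$ the companion matrix of the angular symbol; only after this order reduction is the problem a genuine sum $\mathcal{L}_{1,\nu}+\mathcal{L}_2$ as in \eqref{Pb L1+L2}. Without some such reduction your $L_1$ and $L_2$ are simply not defined.

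Second, the claim that undoing the change of variable ``identifies $D(L_1)\cap D(L_2)$ with $W^{4,p}_{3-\frac{1}{p}}(S_{\omega,\rho_0})$'' is too quick. The sum theorem only delivers $V\in W^{2,p}(0,+\infty;X)\cap L^p(0,+\infty;D(\mathcal{A}))$, i.e. control of pure $t$-derivatives up to order $2$ of the vector $V$ and pure $\theta$-derivatives up to order $4$ of its first component. The mixed derivatives needed for membership in $W^{4,p}_{3-\frac{1}{p}}$ (for instance $\partial_t\partial_\theta^2V_1$ and $\partial_t^2\partial_\theta^2V_1$) require the embedding $W^{2,p}(\RR;L^p(I))\cap L^p(\RR;W^{2,p}(I))\subset W^{2,p}(\RR\times I)$, which the paper establishes in \refs{Sect proof of main Th} via an extension operator and Mihlin's multiplier theorem, before carrying out a derivative-by-derivative weight bookkeeping. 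That bookkeeping is not a formality: the various derivatives $\partial_r^i\partial_\theta^j v$ land in \emph{different} weighted classes $L^p_{\gamma_i}$ (with $\gamma_0=-4+\frac1p$ up to $\gamma_4=3-\frac1p$), of which $\gamma=3-\frac1p$ is only the common envelope making the conclusion $v\in W^{4,p}_{3-\frac{1}{p}}(S_{\omega,\rho_0})$ true in the sense of \eqref{Def Wmp}. Your proposal correctly flags this as a ``technical point'' but does not identify the multiplier-theorem ingredient that makes it work.
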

\begin{Rem}
Note that, we can also verify that we have in addition: for $i=0,1,2$ and $j = 0,1,2,3,4$ such that $0\leqslant i+j\leqslant 4$ 
$$ \frac{\partial^{i+j} v}{\partial r^i \partial \theta^j} \in L^p(S_{\omega,\rho_0}).$$ 
In fact, we have 
$$\dfrac{\partial^3 v}{\partial r^3}, \dfrac{\partial^4 v}{\partial r^3 \partial \theta} \in L^p_{2-\frac{1}{p}}(S_{\omega,\rho_0})\quad \text{and} \quad \dfrac{\partial^4 v}{\partial r^4} \in L^p_{3-\frac{1}{p}}(S_{\omega,\rho_0}),$$ 
see \refs{Sect Reg V2}; but, these two weighted spaces are not embedded into the space $L^p(S_{\omega,\rho_0})$.
\end{Rem}

\section{Reformulations of problem \eqref{pb v}} \label{Sect Reformulation}

\subsection{Some preliminary calculus}

Let us introduce the auxiliary function $\displaystyle \frac{v}{r}$. Then 
\begin{eqnarray*}
\left( r\dfrac{\partial }{\partial r}\right) ^{2}\left( \frac{v}{r}\right) &=&\left( r\dfrac{\partial }{\partial r}\right) \left( r\dfrac{\partial }{%
\partial r}\right) \left(\frac{v}{r}\right) \\ 
&=&\frac{v}{r}-\dfrac{\partial v}{\partial r}+r\dfrac{\partial ^{2}v}{\partial r^{2}}
\\
&=&\frac{v}{r}-2\dfrac{\partial }{\partial r}\left( r.\frac{v}{r}\right) +r\left( \frac{1}{r}%
\dfrac{\partial v}{\partial r}+\dfrac{\partial ^{2}v}{\partial r^{2}}\right) 
\\
&=& - \frac{v}{r} - 2r\dfrac{\partial }{\partial r}\left( \frac{v}{r}\right) +r\left( \frac{1}{r}%
\dfrac{\partial v}{\partial r}+\dfrac{\partial ^{2}v}{\partial r^{2}}\right),
\end{eqnarray*}
so
\begin{equation*}
\Delta u =\left( \frac{1}{r}\dfrac{\partial v}{\partial r}+\dfrac{\partial^{2} v}{\partial r^{2}}\right) +\frac{1}{r^{2}}\dfrac{\partial ^{2}v}{\partial \theta ^{2}} = \frac{1}{r}\left[ \left( r\dfrac{\partial }{\partial r}\right) ^{2}\left(\frac{v}{r}\right) + \frac{v}{r} + 2r\dfrac{\partial }{\partial r}\left( \frac{v}{r}\right) \right] + \frac{1}{r}\dfrac{\partial ^{2}}{\partial \theta ^{2}}\left( \frac{v}{r}\right).
\end{equation*}
Moreover, we have
\begin{eqnarray*}
\Pi_1 & := &\dis \left( r\dfrac{\partial }{\partial r}\right) ^{2}\left[ \left( r\dfrac{\partial }{\partial r}\right) ^{2}\left[ \frac{v}{r}\right] \right] \\
 & = & \left( r\dfrac{\partial }{\partial r}\right) ^{2}\left[\left( r\dfrac{\partial }{\partial r}\right) 
\left( -\frac{v}{r}+\dfrac{\partial v}{\partial r}\right) \right] \\
 &=& \left( r\dfrac{\partial }{\partial r}\right) ^{2}\left[ \frac{v}{r}-%
\dfrac{\partial v}{\partial r}+r\dfrac{\partial ^{2}v}{\partial r^{2}}\right]
\\
&=&\left( r\dfrac{\partial }{\partial r}\right) \left[ -\frac{v}{r}+\dfrac{%
\partial v}{\partial r}+r^{2}\dfrac{\partial ^{3}v}{\partial r^{3}}\right] 
\\
&=&\frac{v}{r}-\dfrac{\partial v}{\partial r}+r\dfrac{\partial ^{2}v}{%
\partial r^{2}}+2r^{2}\dfrac{\partial ^{3}v}{\partial r^{3}}+r^{3}\dfrac{%
\partial ^{4}v}{\partial r^{4}},
\end{eqnarray*}
and
\begin{eqnarray*}
\Pi_2 & := & 2\left( \dfrac{\partial ^{2}}{\partial \theta ^{2}}-1\right) \left( r \dfrac{\partial }{\partial r}\right) ^{2}\left( \frac{v}{r}\right) +\left( \dfrac{\partial ^{2}}{\partial \theta ^{2}}+1\right) ^{2}\left( \frac{v}{r}\right)  \\ 
&=& 2 \left( \dfrac{\partial ^{2}}{\partial \theta ^{2}}-1\right) \left( \frac{v}{r} -
\dfrac{\partial v}{\partial r}+r\dfrac{\partial ^{2}v}{\partial r^{2}}%
\right) +\frac{1}{r}\left( \dfrac{\partial ^{2}}{\partial \theta ^{2}}%
+1\right) ^{2}v \\
&=&2\dfrac{\partial v}{\partial r}-2r\dfrac{\partial ^{2}v}{\partial r^{2}}+%
\frac{4}{r}\dfrac{\partial ^{2}v}{\partial \theta ^{2}}-2\dfrac{\partial
^{3}v}{\partial \theta ^{2}\partial r}+2r\dfrac{\partial ^{4}v}{\partial
\theta ^{2}\partial r^{2}}+\frac{1}{r}\dfrac{\partial ^{4}v}{\partial \theta
^{4}}-\frac{v}{r}.
\end{eqnarray*}
Therefore, we obtain that
$$r^3 \Delta^2 u = \Pi_1 + \Pi_2.$$
We set 
\begin{equation*}
w=\frac{v}{r}.
\end{equation*}
Then, in $S_{\omega,\rho}$, problem \eqref{pb v} becomes 
\begin{equation} \label{EquationEn RetTheta}
\left\{ 
\begin{array}{l}
\dfrac{1}{r^{3}}\left[ \left( r\dfrac{\partial }{\partial r}\right) ^{2}%
\left[ \left( r\dfrac{\partial }{\partial r}\right) ^{2}w\right] +2\left( 
\dfrac{\partial ^{2}}{\partial \theta ^{2}}-1\right) \left( r\dfrac{\partial 
}{\partial r}\right) ^{2}w+\left( \dfrac{\partial ^{2}}{\partial \theta ^{2}}%
+1\right) ^{2}w\right] \\ \ecart
-\dfrac{k}{r}\left[ \left( \left( r\dfrac{\partial }{\partial r}\right)
^{2}w+2\left( r\dfrac{\partial }{\partial r}\right) w+w\right) +\dfrac{%
\partial ^{2}w}{\partial \theta ^{2}}\right] =g \\ \\
w(r,0)=w(r,\omega )=\dfrac{\partial w}{\partial \theta }(r,0)=\dfrac{%
\partial w}{\partial \theta }(r,\omega )=0 \\
w(\rho, \theta) = \dfrac{\partial^2 w}{\partial \theta^2 }(\rho,\theta)=\dfrac{\partial^2 w}{\partial \theta^2 }(\rho,\theta )=0. 
\end{array}\right. 
\end{equation}
\begin{Cor}
Let $g \in L^p_{\frac{1}{p}}(S_{\omega,\rho})$ with $p$ satisfying assumption \eqref{hyp inv sum 0}. Then, there exists $\rho_0 > 0$ such that problem \eqref{EquationEn RetTheta} has a unique solution $w$ satisfying
$$w \in W^{4,p}_{4 - \frac{1}{p}}(S_{\omega,\rho_0}).$$
\end{Cor}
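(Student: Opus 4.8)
The plan is to derive this statement directly from \refT{Th principal}, which I am allowed to assume, through the change of unknown $w=v/r$ performed in this subsection. First I would record that the substitution is a two-sided reduction between the two problems: the preliminary computation above shows that $v$ solves \eqref{pb v} if and only if $w=v/r$ solves \eqref{EquationEn RetTheta}. The edge conditions transform cleanly because $\partial_\theta(v/r)=(\partial_\theta v)/r$, so $v(r,0)=\partial_\theta v(r,0)=0$ become $w(r,0)=\partial_\theta w(r,0)=0$ (and likewise on $\Gamma_\omega$), while the conditions at $r=\rho$ turn into the corresponding conditions on $w$. Consequently existence and uniqueness of a solution of \eqref{EquationEn RetTheta} are equivalent to those of \eqref{pb v}, the latter being granted by \refT{Th principal}; it only remains to identify the weighted space reached by $w$.

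The heart of the matter is therefore the regularity statement that the multiplication map $M:v\mapsto v/r$ sends $W^{4,p}_{3-\frac1p}(S_{\omega,\rho_0})$ into $W^{4,p}_{4-\frac1p}(S_{\omega,\rho_0})$, with inverse $w\mapsto rw$. I would prove it by expanding derivatives with the Leibniz rule: since $\partial_r^a(r^{-1})=(-1)^a a!\,r^{-(a+1)}$ and $r$ is $\theta$-independent, for $i+j\leqslant 4$ one has
\[
r^{4-\frac1p}\,\partial_r^i\partial_\theta^j\!\left(\frac{v}{r}\right)=\sum_{a=0}^{i}c_{i,a}\,r^{3-a-\frac1p}\,\partial_r^{\,i-a}\partial_\theta^j v,
\]
with explicit constants $c_{i,a}$ (and $c_{i,0}\neq 0$). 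The term $a=0$ lies in $L^p$ because $v\in W^{4,p}_{3-\frac1p}$ and $i+j\leqslant 4$. For $a\geqslant 1$ the weight $r^{3-a-\frac1p}$ is more singular, and I would absorb the extra factor $r^{-a}$ by applying, $a$ times and at fixed $\theta$, the one-dimensional weighted Hardy inequality $\|r^{\beta-1}\phi\|_{L^p(0,\rho_0)}\leqslant C\,\|r^{\beta}\partial_r\phi\|_{L^p(0,\rho_0)}$; each application raises both the radial differentiation order and the weight exponent by one, so after $a$ steps the term is dominated by $\|r^{3-\frac1p}\partial_r^{\,i}\partial_\theta^j v\|_{L^p}<\infty$. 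The reverse inclusion, and hence that $M$ is an isomorphism, follows the same scheme applied to $w\mapsto rw$.

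The main obstacle is precisely this Hardy step at the highest radial orders. The inequality above holds only for $\beta\neq 1-\frac1p$, and the chain passes through this critical exponent exactly for the terms $a=3$ and $a=4$, that is for $r^{-\frac1p}\partial_r v$ and $r^{-1-\frac1p}v$, which occur when $i=3$ or $i=4$. I expect this to be the delicate point and would treat it not term by term but on the full combination $r^{4-\frac1p}\partial_r^{\,i}(v/r)$, exploiting the finer regularity of the solution recorded in the Remark following \refT{Th principal} (namely $\partial_r^3 v\in L^p_{2-\frac1p}$ and $\partial_r^4 v\in L^p_{3-\frac1p}$) together with the homogeneous data at $r=\rho_0$, which makes the boundary terms produced by the Hardy integrations by parts vanish. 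Assumption \eqref{hyp inv sum 0}, which keeps the exponents of the corner singularities away from the critical values, is what ultimately rules out the degenerate case.

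Finally, uniqueness transfers immediately: $M$ being a linear bijection from $W^{4,p}_{3-\frac1p}(S_{\omega,\rho_0})$ onto $W^{4,p}_{4-\frac1p}(S_{\omega,\rho_0})$, the uniqueness of $v$ provided by \refT{Th principal} forces the uniqueness of $w=Mv$, which completes the proof.
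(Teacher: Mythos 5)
Your reduction of the Corollary to \refT{Th principal} via the bijection $v\mapsto v/r$ is the right idea, and the transfer of the boundary conditions and of uniqueness is fine. The gap is in your key lemma: the map $M:v\mapsto v/r$ does \emph{not} send $W^{4,p}_{3-\frac1p}(S_{\omega,\rho_0})$ into $W^{4,p}_{4-\frac1p}(S_{\omega,\rho_0})$. Take $v(r,\theta)=\eta(\theta)$ with $\eta\in C_c^\infty(0,\omega)$: every term $r^{3-\frac1p}\partial_r^i\partial_\theta^j v$ is either $0$ or $r^{3-\frac1p}\eta^{(j)}(\theta)\in L^p$, so $v\in W^{4,p}_{3-\frac1p}$, yet $r^{4-\frac1p}\partial_r^4(v/r)=24\,\eta(\theta)\,r^{-1-\frac1p}\notin L^p$ near $r=0$. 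So no amount of Hardy-type juggling starting from membership in $W^{4,p}_{3-\frac1p}$ alone can close the argument; and the repair you sketch cannot work either, because the critical Hardy exponent $\beta=1-\frac1p$ is obstructed by constant (or logarithmic) behaviour near $r=0$, which is unaffected by the homogeneous data at $r=\rho_0$ and by hypothesis \eqref{hyp inv sum 0}. The terms you flag as delicate ($a=3,4$, i.e.\ $r^{-\frac1p}\partial_r v$ and $r^{-1-\frac1p}v$) are exactly where the lemma fails, and the extra information you invoke from the Remark concerns the \emph{high} radial derivatives, not these low-order ones.

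What actually closes the argument — and what the paper does — is that the proof of \refT{Th principal} yields much more than $v\in W^{4,p}_{3-\frac1p}$: in Sections~\ref{SubSect First situation} and \ref{Sect Reg V2} each derivative is placed in its own, much more strongly weighted space, namely $v\in L^p_{-4+\frac1p}$, $\partial_r v\in L^p_{-3+\frac1p}$, $\partial_r^2 v\in L^p_{-2+\frac1p}$ (and similarly for the mixed derivatives), with only $\partial_r^3 v\in L^p_{2-\frac1p}$ and $\partial_r^4 v\in L^p_{3-\frac1p}$ at the weaker weights. With these inputs every Leibniz correction term $r^{4-\frac1p}\,r^{-(a+1)}\partial_r^{i-a}\partial_\theta^j v$ is controlled directly, using only the boundedness of $r\leqslant\rho_0$ (no Hardy inequality is needed: one checks, e.g., $\gamma_1-1=\gamma_0$, $\gamma_2-2=\gamma_0$, etc.). In fact the paper obtains the two regularities simultaneously, since $V_1$ and $V_2$ are, up to powers of $r=\rho e^{-t}$, the function $w$ and $(r\partial_r)^2w$ themselves, so each membership $\partial_r^i\partial_\theta^j v\in L^p_{\gamma}$ comes paired with $\partial_r^i\partial_\theta^j w\in L^p_{\gamma+1}$; that is why the Corollary is stated as a direct consequence. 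To fix your write-up, replace the isomorphism claim between the two Sobolev spaces by these term-by-term weighted memberships of the particular solution $v$.
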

This result is a direct consequence of \refT{Th principal}.
\begin{Rem}
Note that, we can also verify that we have in addition: for $i=0,1,2$ and $j = 0,1,2,3,4$ such that $0\leqslant i+j\leqslant 4$ 
$$ \frac{\partial^{i+j} w}{\partial r^i \partial \theta^j} \in L^p(S_{\omega,\rho_0}).$$ 
In fact, we have 
$$\dfrac{\partial^3 w}{\partial r^3}, \dfrac{\partial^4 w}{\partial r^3 \partial \theta} \in L^p_{3-\frac{1}{p}}(S_{\omega,\rho_0})\quad \text{and} \quad \dfrac{\partial^4 w}{\partial r^4} \in L^p_{4-\frac{1}{p}}(S_{\omega,\rho_0}),$$ 
see \refs{Sect Reg V2}; but, these two weighted spaces are not embedded into the space $L^p(S_{\omega,\rho_0})$.
\end{Rem}

\subsection{Vector formulation of problem \eqref{EquationEn RetTheta}}

Now, let us consider the vector variable $\Psi (r,\theta ):$ 
\begin{equation*}
\Psi =\left( 
\begin{array}{c}
w \\ \ecart
\displaystyle \left( r\dfrac{\partial }{\partial r}\right) ^{2}w
\end{array}%
\right) ,
\end{equation*}%
and the following matrix  
\begin{equation*}
\mathcal{A}=\left( 
\begin{array}{cc}
0 & 1 \\ \ecart
-\left( \dfrac{\partial ^{2}}{\partial \theta ^{2}}+1\right) ^{2} & -2\left( 
\dfrac{\partial ^{2}}{\partial \theta ^{2}}-1\right) 
\end{array}%
\right).
\end{equation*}
We have
\begin{eqnarray*}
&&\hspace*{-1cm}\left( r\dfrac{\partial }{\partial r}\right) ^{2}\Psi -\mathcal{A}\Psi  \\
&=&\left( 
\begin{array}{c}
\displaystyle \left( r\dfrac{\partial }{\partial r}\right) ^{2}w  \\ \ecart 
\displaystyle \left( r\dfrac{\partial }{\partial r}\right) ^{2}\left( r\dfrac{\partial }{
\partial r}\right) ^{2}w 
\end{array}
\right) +\left( 
\begin{array}{c}
\displaystyle -\left( r\dfrac{\partial }{\partial r}\right) ^{2} w  \\ \ecart
\displaystyle \left( \dfrac{\partial ^{2}}{\partial \theta ^{2}}+1\right) ^{2} w + 2 \left( \dfrac{\partial ^{2}}{\partial \theta ^{2}}-1\right)
\left( r\dfrac{\partial }{\partial r}\right) ^{2}w 
\end{array}%
\right)  \\ \ecart
&=&\left( 
\begin{array}{c}
0 \\ \ecart
\displaystyle \left( r\dfrac{\partial }{\partial r}\right) ^{2}\left( r\dfrac{\partial }{\partial r}\right) ^{2}w +2\left( \dfrac{\partial ^{2}}{\partial \theta ^{2}}-1\right) \left( r\dfrac{\partial }{\partial r}\right)^{2}w +\left( \dfrac{\partial ^{2}}{\partial \theta ^{2}}+1\right) ^{2}w 
\end{array}
\right) \\ \ecart
& = & \binom{0}{\Pi_1 +\Pi_2 } = \binom{0}{r^3 \Delta^2 u}.
\end{eqnarray*}

We set
$$\A_0 = \left( 
\begin{array}{cc}
0 & 0 \\ 
\dfrac{\partial ^{2}}{\partial \theta ^{2}}+1 & 1%
\end{array}%
\right) \quad \text{and}\quad \B_0 = \left( 
\begin{array}{cc}
0 & 0 \\ 
2\left( r\dfrac{\partial }{\partial r}\right)  & 0%
\end{array}\right).$$
We will precise the domain of all these operators in section \ref{sect espace de travail et domaine}. It is clear that the action of these operators are independent. Then
$$
\left(\mathcal{A}_0 + \mathcal{B}_0\right)\Psi = \left( 
\begin{array}{cc}
0 & 0 \\ \ecart
\displaystyle\dfrac{\partial ^{2}}{\partial \theta ^{2}}+1 & 1%
\end{array}%
\right) \left( 
\begin{array}{c}
w \\ \ecart
\displaystyle\left( r\dfrac{\partial }{\partial r}\right) ^{2} w
\end{array}%
\right) + \left( 
\begin{array}{cc}
0 & 0 \\ \ecart
2\left( r\dfrac{\partial }{\partial r}\right)  & 0%
\end{array}%
\right)  \left( 
\begin{array}{c}
w \\ \ecart
\displaystyle\left( r\dfrac{\partial }{\partial r}\right) ^{2}w 
\end{array}
\right),$$ 
hence
\begin{eqnarray*}
\left(\mathcal{A}_0 + \mathcal{B}_0\right) \Psi &=& \left( 
\begin{array}{c}
0 \\ \ecart
\displaystyle\dfrac{\partial ^{2}}{\partial \theta ^{2}} w + w +\left( r \dfrac{\partial }{\partial r}\right) ^{2}w 
\end{array}
\right) + \left( 
\begin{array}{c}
0 \\ \ecart
\displaystyle2\left( r\dfrac{\partial }{\partial r}\right) w 
\end{array}
\right) \\ \ecart
&=&\left( 
\begin{array}{c}
0 \\ \ecart
\displaystyle 2r\dfrac{\partial }{\partial r} w + \dfrac{\partial ^{2}}{%
\partial \theta ^{2}}w + w + \left( r\dfrac{\partial }{\partial r}\right) ^{2} w 
\end{array}
\right)  \\ \ecart
&=&\left( 
\begin{array}{c}
0 \\ 
r\Delta u
\end{array}
\right) = r \left( 
\begin{array}{c}
0 \\ 
\Delta u
\end{array}
\right) .
\end{eqnarray*}
The generalized diffusion equation becomes  
\begin{equation*}
\dfrac{1}{r^{3}}\left[ \left( r\dfrac{\partial }{\partial r}\right) ^{2}\Psi
-\mathcal{A}\Psi \right] -\dfrac{k}{r}\left(\mathcal{A}_0 + \mathcal{B}_0\right)\Psi =\left( 
\begin{array}{c}
0 \\ 
g
\end{array}
\right) .
\end{equation*}
Finally, we obtain the following complete equation 
\begin{equation}\label{eq complete}
\left[ \left( r\dfrac{\partial }{\partial r}\right) ^{2}\Psi -\mathcal{A}%
\Psi \right] -kr^{2}\mathcal{A}_{0}\Psi -kr^{2}\mathcal{B}_{0}\Psi =\left( 
\begin{array}{c}
0 \\ 
r^{3}g%
\end{array}%
\right) .
\end{equation}
Note that linear operators $\mathcal{A}$ and $\mathcal{A}_{0}$ act with respect to variable $\theta $ whereas operator $\mathcal{B}_{0}$ acts with respect to variable $r\dfrac{\partial }{\partial r}$.

\subsection{New formulation in a finite cone}

We apply the following variables and functions change  
\begin{equation*}
r = \rho e^{-t}, \quad \phi (t,\theta )= w(\rho
e^{-t},\theta ) \quad \text{and}\quad g(\rho e^{-t},\theta
)=G(t,\theta ),
\end{equation*}
then, it is easy to verify that 
\begin{equation*}
\left( r\dfrac{\partial }{\partial r}\right) w = - \frac{\partial \phi}{\partial t}  \quad \text{and} \quad \left( r\dfrac{\partial }{\partial r}\right) ^{2} w =\left[r\dfrac{\partial}{\partial r} + r^{2}\dfrac{\partial ^{2}}{\partial r^{2}}\right] w = \frac{\partial^2 \phi}{\partial t^2}.
\end{equation*}
We set $\Phi(t,\theta) = \Psi(\rho e^{-t}, \theta)$; then
\begin{equation*}
\Phi = \left( 
\begin{array}{c}
\phi \\ \ecart
\dis\frac{\partial^2 \phi}{\partial t^2}
\end{array}
\right).
\end{equation*}
Therefore, equation \eqref{eq complete} is now set on the strip $\Sigma = (0,+\infty) \times (0,\omega)$ in the following form
\begin{eqnarray*}
&&\hspace*{-2.5cm}\left[ \left( r\dfrac{\partial }{\partial r}\right) ^{2}\Psi -\mathcal{A}
\Psi \right] -kr^{2}\mathcal{A}_{0}\Psi -kr^{2}\mathcal{B}_{0}\left[ \Psi %
\right] \\
&=&\left[ \frac{\partial^2 \Phi}{\partial t^2} -\mathcal{A}\Phi \right] -k\rho ^{2}e^{-2t}\mathcal{A%
}_{0}\Phi +k\rho ^{2}e^{-2t}\mathcal{B}_{1}\left[ \Phi \right] =\left( 
\begin{array}{c}
0 \\ 
\rho ^{3}e^{-3t}G
\end{array}
\right),
\end{eqnarray*}
where
\begin{equation*}
\mathcal{B}_{1}=\left( 
\begin{array}{cc}
0 & 0 \\ 
\displaystyle -2\frac{\partial}{\partial t} & 0%
\end{array}%
\right) .
\end{equation*}
The boundary conditions, in problem \eqref{EquationEn RetTheta}, on $w$ become  
\begin{equation*}
\phi (r,0)=\phi (r,\omega )=\dfrac{\partial \phi }{\partial \theta }(r,0)=%
\dfrac{\partial \phi }{\partial \theta }(r,\omega )=0.
\end{equation*}
As usual, we will use the vector valued notation:
\begin{equation*}
\Phi (t)(\theta) := \Phi (t,\theta )=\left( 
\begin{array}{c}
\phi (t,\theta ) \\ \ecart 
\displaystyle \frac{\partial^2 \phi}{\partial t^2} (t,\theta )%
\end{array}%
\right) =\left( 
\begin{array}{c}
\phi (t,.) \\ \ecart
\displaystyle\frac{\partial^2 \phi}{\partial t^2} (t,.)%
\end{array}%
\right) (\theta ) :=\left( 
\begin{array}{c}
\phi (t) \\ \ecart
\displaystyle\frac{\partial^2 \phi}{\partial t^2} (t)%
\end{array}%
\right) (\theta ).
\end{equation*}
Hence, we deduce the following abstract vector valued equation 
\begin{equation*}
\left[ \Phi'' (t)-\mathcal{A}\Phi (t)\right] -k\rho
^{2}e^{-2t}\mathcal{A}_{0}\Phi (t)+k\rho ^{2}e^{-2t}\left[ \mathcal{B}%
_{1}\Phi \right] (t)=\left( 
\begin{array}{c}
0 \\ 
\rho ^{3}e^{-3t}G(t)%
\end{array}%
\right) ,
\end{equation*}
where
\begin{equation*}
\left[ \mathcal{B}_{1}\Phi \right] (t) = \left( 
\begin{array}{cc}
0 & 0 \\ \ecart
\displaystyle -2 \frac{\partial}{\partial t} & 0%
\end{array}%
\right) \left( 
\begin{array}{c}
\phi \\ \ecart
\displaystyle\frac{\partial^2 \phi}{\partial t^2}
\end{array}
\right) (t) = \left( 
\begin{array}{c}
0 \\ \ecart
\displaystyle -2 \frac{\partial \phi}{\partial t}(t)
\end{array}
\right).
\end{equation*}
Note that $\mathcal{A}$ and $\mathcal{A}_{0}$ act on $\Phi (t)$ with respect to $\theta$, while $\mathcal{B}_{1}$ acts on $\Phi $ with respect to $t$.

Now, we have to solve on $(0,+\infty)$ the following problem 
\begin{equation}
\left\{ 
\begin{array}{l}
\Phi ^{\prime \prime }(t)-\mathcal{A}\Phi (t)-k\rho ^{2}e^{-2t}\mathcal{A}_{0}\Phi (t)+ k\rho ^{2}e^{-2t} \left[ \mathcal{B}_{1}\Phi \right] (t)=\left( 
\begin{array}{c}
0 \\ 
\rho ^{3}e^{-3t}G(t)%
\end{array}%
\right) \\ 
\Phi (0)=0.
\end{array}
\right.  \label{ProbConeFini}
\end{equation}
\begin{Rem}\label{Rem Phi(+infini)=0}
Note that the boundary condition at $t=+\infty$, will be included in the vector valued space containing the solution $\Phi$.
\end{Rem}

\subsection{Sums of linear operators} \label{sect espace de travail et domaine}

In this section, we are going to write problem \eqref{ProbConeFini} as a sum of linear operators, firstly in the following Banach space 
\begin{equation*}
X=W_{0}^{2,p}(0,\omega) \times L^{p}(0,\omega),
\end{equation*}
see \eqref{ProbAbstrait} below and secondly, in $L^p(0,+\infty;X)$, see \eqref{Pb L1+L2} below.

Here $X$ is equipped, for instance, with the following norm 
\begin{equation*}
\left\Vert \left( 
\begin{array}{c}
\psi _{1} \\ 
\psi _{2}%
\end{array}%
\right) \right\Vert _{X}=\left\Vert \psi _{1}\right\Vert
_{W_{0}^{2,p}(0,\omega)}+\left\Vert \psi _{2}\right\Vert
_{L^{p}(0,\omega)},
\end{equation*}
where
\begin{equation*}
W_{0}^{2,p}(0,\omega) = \left\{ \varphi \in W^{2,p}(0,\omega) : \varphi (0) = \varphi (\omega ) = \varphi'(0) = \varphi'(\omega) = 0\right\} .
\end{equation*}
Then, we define the linear operator $\mathcal{A}$ by
\begin{equation*}
\left\{ 
\begin{array}{lll}
D(\mathcal{A}) & = & \dis \left[ W^{4,p}(0,\omega)\cap W_{0}^{2,p}(0,\omega)\right] \times W_{0}^{2,p}(0,\omega)\subset X \\ \ecart
\mathcal{A}\left( 
\begin{array}{c}
\psi _{1} \\ 
\psi _{2}
\end{array}
\right) &=& \left( 
\begin{array}{c}
\psi _{2} \\ 
-\left( \dfrac{\partial ^{2}}{\partial \theta ^{2}}+1\right) ^{2}\psi
_{1}-2\left( \dfrac{\partial ^{2}}{\partial \theta ^{2}}-1\right) \psi _{2}%
\end{array}%
\right), \quad \left( 
\begin{array}{c}
\psi _{1} \\ 
\psi _{2}%
\end{array}%
\right) \in D(\A).
\end{array}
\right.
\end{equation*}
In the same way, we define operator $\mathcal{A}_{0}$ by 
\begin{equation*}
\left\{ 
\begin{array}{lll}
D(\mathcal{A}_{0}) & = & W_{0}^{2,p}(0,\omega) \times L^{p}(0,\omega) = X \\ \ecart
\mathcal{A}_{0}\left( 
\begin{array}{c}
\psi _{1} \\ 
\psi _{2}
\end{array}
\right) & = & \dis \left( 
\begin{array}{c}
0 \\ 
\left( \dfrac{\partial ^{2}}{\partial \theta ^{2}}+1\right) \psi _{1}+\psi
_{2}%
\end{array}%
\right), \quad \left( 
\begin{array}{c}
\psi _{1} \\ 
\psi _{2}%
\end{array}%
\right) \in D(\A_0).%
\end{array}%
\right.
\end{equation*}
It is clear that $D(\mathcal{A})\subset D(\mathcal{A}_{0})$. Note that operator $\mathcal{A}_{0}$ is continuous from $X$ into itself since
\begin{eqnarray*}
\left\Vert \mathcal{A}_{0}\left( 
\begin{array}{c}
\psi _{1} \\ 
\psi _{2}%
\end{array}%
\right) \right\Vert _{X} &=&\left\Vert \left( 
\begin{array}{c}
0 \\ 
\left( \dfrac{\partial ^{2}}{\partial \theta ^{2}}+1\right) \psi _{1}+\psi
_{2}
\end{array}
\right) \right\Vert _{X}=\left\Vert \left( \dfrac{\partial ^{2}}{\partial
\theta ^{2}}+1\right) \psi _{1}+\psi _{2}\right\Vert _{L^{p}(0,\omega)} \\
&\leqslant &\left\Vert \psi _{1}\right\Vert _{W_{0}^{2,p}(0,\omega)} + \left\Vert \psi _{2}\right\Vert _{L^{p}(0,\omega)}=\left\Vert
\left( 
\begin{array}{c}
\psi _{1} \\ 
\psi _{2}%
\end{array}%
\right) \right\Vert _{X}.
\end{eqnarray*}
Equation \eqref{ProbConeFini} is set in the Banach space $X$.

Recall that the second member in problem \eqref{Pb cone infini} satisfies
$$f \in L^p\left(S_{\omega,\rho}\right), \quad \text{for } p \in (1,+\infty).$$
Set
\begin{equation*}
t\mapsto e^{-3t}G(t)(.)=e^{-3t}G(t,.)=H(t,.)=H(t)(.).
\end{equation*}
Therefore, we have 
\begin{eqnarray*}
\int_{S_{\omega,\rho }}\left\vert f(x,y)\right\vert ^{p}dxdy &=&\int_{S_{\omega,\rho
}}\left\vert g(r,\theta )\right\vert ^{p}rdrd\theta =\rho ^{2}\int_{%
\Sigma }\left\vert G(t,\theta )\right\vert ^{p}e^{-2t}dtd\theta \\
&=&\rho ^{2}\int_{\Sigma }\left\vert e^{\left( 3-\frac{2}{p}\right) t}H(t,\theta )\right\vert ^{p}dtd\theta \\
&=&\rho ^{2}\int_{0}^{+\infty }\left\vert e^{\left( 3-\frac{2}{p}\right)
t}\right\vert ^{p}\left[ \left( \int_{0}^{\omega }\left\vert H(t)(\theta
)\right\vert ^{p}d\theta \right) ^{1/p}\right] ^{p}dt \\
&=&\rho ^{2}\int_{0}^{+\infty } e^{\left( 3p-2\right)
t}\left\Vert H(t)\right\Vert _{L^{p}(0,\omega)}^{p}dt.
\end{eqnarray*}
It follows that the second member $H$ is in the weighted Sobolev space as recalled in \eqref{Def Wmp}
\begin{equation*}
\left\{H : t\mapsto e^{\left( 3-\frac{2}{p}\right) t}H\in L^{p}(\Sigma)\right\} = L_{\nu}^{p}(0,+\infty ;L^{p}(0,\omega)),
\end{equation*}
where 
$$\nu = 3 - \frac{2}{p}\in (1,3),$$
which is exactly the Sobolev exponent of the space $W^{3,p}(\Sigma)$ in dimension 2.

Then, since it would not be easy to work in weighted Sobolev spaces, we will use the following new vector valued function :
\begin{equation}\label{V = Phi}
V(t)=e^{\nu t}\Phi (t) = \left( 
\begin{array}{c}
e^{\nu t}\phi (t) \\ 
e^{\nu t} \phi''(t)
\end{array}%
\right) =\left( 
\begin{array}{c}
V_{1}(t) \\ 
V_{2}(t)
\end{array}\right).
\end{equation}
Since we have
\begin{equation*}
\Phi (t)=\left( 
\begin{array}{c}
e^{-\nu t}V_{1}(t) \\ 
e^{-\nu t}V_{2}(t)
\end{array}
\right),
\end{equation*}
we deduce that
\begin{equation*}
\Phi'(t) = -\nu e^{-\nu t}V(t)+e^{-\nu t}V'(t) = e^{-\nu t}\left(
\partial_{t}-\nu I\right) V(t) ,
\end{equation*}
and
\begin{equation*}
\Phi''(t) = \nu ^{2}e^{-\nu t}V(t) - 2\nu e^{-\nu t} V'(t) + e^{-\nu t} V''(t) = e^{-\nu t}\left( \partial_{t}-\nu I\right)^{2}V(t).
\end{equation*}
Moreover, we obtain
\begin{eqnarray*}
\left[ \mathcal{B}_{1}\Phi \right] (t) &=&\left( 
\begin{array}{cc}
0 & 0 \\ 
-2\partial_{t} & 0
\end{array}\right) \left( 
\begin{array}{c}
e^{-\nu t}V_{1}(t) \\ 
e^{-\nu t}V_{2}(t)%
\end{array}%
\right) \\
&=&e^{-\nu t}\left( 
\begin{array}{cc}
0 & 0 \\ 
-2(\partial_{t}-\nu I) & 0%
\end{array}%
\right) \left( 
\begin{array}{c}
V_{1}(t) \\ 
V_{2}(t)%
\end{array}%
\right) \\
&=&e^{-\nu t}\left[ \mathcal{B}_{2,\nu}V\right] (t),
\end{eqnarray*}%
where
\begin{equation*}
\mathcal{B}_{2,\nu}=\left( 
\begin{array}{cc}
0 & 0 \\ 
-2(\partial_{t}-\nu I) & 0
\end{array}
\right) .
\end{equation*}
Hence, problem \eqref{ProbConeFini} becomes
\begin{equation*}
\left\{ 
\begin{array}{l}
e^{-\nu t}\left( \partial_{t}-\nu I\right)^{2}V(t)-e^{-\nu t}\mathcal{A} V(t)-k\rho ^{2}e^{-\nu t}e^{-2t}\mathcal{A}_{0}V(t) \\ 
-k\rho ^{2}e^{-\nu t}e^{-2t}\left[ (\mathcal{B}_{2,\nu}V)\right] (t)=\left( 
\begin{array}{c}
0 \\ 
\rho ^{3}H(t)
\end{array}
\right) \\ 
V(0)=0,
\end{array}
\right.
\end{equation*}
then
\begin{equation}\label{ProbAbstrait}
\left\{ \hspace*{-0.16cm}
\begin{array}{l}
\left( \partial_{t}-\nu I\right) ^{2}V(t)-\mathcal{A}V(t)-k\rho ^{2}e^{-2t}
\mathcal{A}_{0}V(t) -k\rho ^{2}e^{-2t}\left[ (\mathcal{B}_{2,\nu}V)\right] (t)\hspace*{-0.05cm}=\hspace*{-0.05cm}\left( 
\begin{array}{c}
0 \\ 
\rho ^{3}e^{\nu t}H(t)
\end{array}\right) \\ 
V(0)=0.
\end{array}
\right.  
\end{equation}
We have 
\begin{equation*}
t\mapsto \rho ^{3}e^{\nu t}H(t)\in L^{p}(\Sigma) = L^{p}(0,+\infty ;L^{p}(0,\omega)),
\end{equation*}
and
\begin{equation*}
t\mapsto \left( 
\begin{array}{c}
0 \\ 
\rho ^{3}e^{\nu t}H(t)
\end{array}
\right) \in L^{p}\left( 0,+\infty ;W_{0}^{2,p}(0,\omega)\times L^{p}(0,\omega)\right) = L^{p}(0,+\infty ;X).
\end{equation*}
Finally, let us introduce the following abstract linear operators: 
\begin{equation*}
\left\{ \begin{array}{cll}
D(\mathcal{L}_{1,\nu}) & = & \dis \left\{ V\in W^{2,p}(0,+\infty ;X): V(0) = V(+\infty) = 0\right\} \\ \ecart
\left[ \mathcal{L}_{1,\nu}(V)\right] (t) & = & \dis \left( \partial_{t}-\nu I\right)^{2}V(t)=V''(t)-2\nu V'(t)+\nu^{2}V(t),
\end{array}\right.
\end{equation*}
with $\dis\nu = 3 - \frac{2}{p} \in (1,3)$, 
\begin{equation*}
\left\{ 
\begin{array}{lll}
D(\mathcal{L}_{2}) & = &\dis \left\{ V\in L^{p}(0,+\infty ;X): \text{for }a.e.~t\in
(0,+\infty ),~ V(t)\in D(\mathcal{A})\right\} \\ \ecart
\left[ \mathcal{L}_{2}(V)\right] (t) & = & -\mathcal{A}V(t),
\end{array}
\right.
\end{equation*}
\begin{equation*}
\left\{ 
\begin{array}{lll}
D(\mathcal{P}_{1}) & = &\dis \left\{ V\in L^{p}(0,+\infty ;X): \text{for }a.e.~t\in
(0,+\infty ),~ V(t)\in D(\mathcal{A}_0)\right\} \\ \ecart
\left[ \mathcal{P}_{1}(V)\right] (t) & = & -e^{-2t} \mathcal{A}_{0}V(t),
\end{array}
\right.
\end{equation*}
and
\begin{equation*}
\left\{ 
\begin{array}{lll}
D(\mathcal{P}_{2,\nu}) & = &\dis W^{1,p}(0,+\infty ;X) \\ \ecart
\left[ \mathcal{P}_{2,\nu}(V)\right] (t) & = & -e^{-2t}\left( \mathcal{B}_{2,\nu}V\right) (t).
\end{array}\right.
\end{equation*}
Then, problem \eqref{ProbAbstrait} can be written as the following abstract equation
\begin{equation}\label{Pb L1+L2}
\left(\L_{1,\nu} + \L_2\right) V + k \rho^2 \left(\P_1 + \P_{2,\nu} \right)V = \F_\nu,
\end{equation}
set in $L^p(0,+\infty;X)$, with $p \in (1,+\infty)$, where, for almost every $t \in (0,+\infty)$
$$\F_\nu(t) =  \left( 
\begin{array}{c}
0 \\ 
\rho ^{3}e^{\nu t}H(t)
\end{array}\right).$$

\section{Proof of \refT{Th principal}} \label{Sect proof of main Th}

\subsection{Resolution of equation \eqref{Pb L1+L2}}

Equation \eqref{Pb L1+L2} will be completely studied in the second part of this work by using the sum theory of linear operators, where the main result described by Theorem 1.1 in Labbas, Maingot and Thorel \cite{Cone P2} states that there exists $\rho_0 >0$ such that for all $\rho \in(0,\rho_0]$, there exists a unique solution $V \in D(\L_{1,\nu}+\L_2)$ to equation \eqref{Pb L1+L2} that is
$$V \in W^{2,p}(0,+\infty;X) \cap L^p(0,+\infty;D(\A)).$$
Thus, we know that there exists a continuous extension from $W^{2,p}(0,+\infty;X)$ into $W^{2,p}(\RR;X)$ and also from $L^p(0,+\infty;D(\A))$ into $L^p(\RR;D(\A))$; it suffices, for instance to use the well-known Babich techniques.

Set $\widetilde{V}$, the extension of $V$; it is then written as 
$$\widetilde{V} = \left(\begin{array}{c}
\widetilde{V_1} \\
\widetilde{V_2}
\end{array}\right).$$
So  
$$\widetilde{V_1} \in W^{2,p}\left(\RR; W^{2,p}_0(0,\omega)\right) \cap L^p\left(\RR; W^{4,p}(0,\omega)\cap  W^{2,p}_0(0,\omega)\right),$$
and
$$\widetilde{V_2} \in W^{2,p}\left(\RR; L^p(0,\omega)\right) \cap L^p\left(\RR; W^{2,p}_0(0,\omega)\right).$$
From this two properties, we obtain
$$\widetilde{V_1}, \widetilde{V_2} \in W^{2,p}\left(\RR \times (0,\omega)\right),$$
and 
$$\widetilde{V_1} \in W^{2,p}\left(\RR; W^{2,p}_0(0,\omega)\right) \cap L^p\left(\RR; W^{4,p}(0,\omega)\right),$$
by using the following lemma.  
\begin{Lem}
Let $I=\;]a,b[$, $a<b$ be an open bounded subset of $\RR$. Then
$$W^{2,p}(\RR; L^p(I)) \cap L^p(\RR; W^{2,p}(I)) \subset W^{2,p}(\RR \times I).$$
\end{Lem}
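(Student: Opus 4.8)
The plan is to reduce the claim to the single nontrivial point, namely the control of the mixed second derivative $\partial_t\partial_\theta u$, and then to obtain that control on the full plane by a Fourier multiplier argument. Let $u \in W^{2,p}(\RR; L^p(I)) \cap L^p(\RR; W^{2,p}(I))$. By Fubini's theorem the identification $L^p(\RR; L^p(I)) = L^p(\RR\times I)$ holds, so membership in the first space yields $u, \partial_t u, \partial_t^2 u \in L^p(\RR\times I)$, while membership in the second yields $u, \partial_\theta u, \partial_\theta^2 u \in L^p(\RR\times I)$. Thus, to conclude $u \in W^{2,p}(\RR\times I)$ it remains only to show $\partial_t\partial_\theta u \in L^p(\RR\times I)$; equivalently, to establish the a priori estimate
\begin{equation*}
\|\partial_t\partial_\theta u\|_{L^p(\RR\times I)} \leqslant C\left(\|u\|_{L^p(\RR\times I)} + \|\partial_t^2 u\|_{L^p(\RR\times I)} + \|\partial_\theta^2 u\|_{L^p(\RR\times I)}\right).
\end{equation*}

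Since $I$ is bounded, I would first remove the boundary by a Stein-type extension operator $E : W^{2,p}(I) \to W^{2,p}(\RR)$ that is simultaneously bounded from $L^p(I)$ into $L^p(\RR)$ and from $W^{2,p}(I)$ into $W^{2,p}(\RR)$, and whose restriction to $I$ is the identity. Applying $E$ in the $\theta$ variable only, pointwise in $t$, produces $U(t,\cdot) := E(u(t,\cdot))$. Because $E$ is a fixed bounded linear operator acting in $\theta$ alone, it commutes with $\partial_t$, and a density/Bochner argument shows that $U$ is well defined and measurable with $\partial_t^2 U = E(\partial_t^2 u) \in L^p(\RR; L^p(\RR))$, and with $\partial_\theta^2 U \in L^p(\RR; L^p(\RR))$ coming from $\|U(t,\cdot)\|_{W^{2,p}(\RR)} \leqslant C\|u(t,\cdot)\|_{W^{2,p}(I)}$ integrated in $t$. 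Hence $U \in W^{2,p}(\RR; L^p(\RR)) \cap L^p(\RR; W^{2,p}(\RR))$ and, by Fubini again, $U, \partial_t^2 U, \partial_\theta^2 U \in L^p(\RR^2)$.

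On the full plane the estimate follows from the Mikhlin--H\"ormander multiplier theorem: the symbol $m(\xi,\eta) = \xi\eta/(1+\xi^2+\eta^2)$ associated with the operator $\partial_t\partial_\theta(I - \partial_t^2 - \partial_\theta^2)^{-1}$ is bounded and satisfies the H\"ormander derivative bounds, so it defines a bounded Fourier multiplier on $L^p(\RR^2)$ for every $p \in (1,+\infty)$. This gives $\|\partial_t\partial_\theta U\|_{L^p(\RR^2)} \leqslant C(\|U\|_{L^p(\RR^2)} + \|\partial_t^2 U\|_{L^p(\RR^2)} + \|\partial_\theta^2 U\|_{L^p(\RR^2)})$, hence $U \in W^{2,p}(\RR^2)$. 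Restricting to $\RR\times I$, where $U$ coincides with $u$, yields $u \in W^{2,p}(\RR\times I)$ together with the desired estimate.

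The main obstacle is precisely this mixed-derivative bound, which is false at the level of naive pointwise estimates and genuinely requires a Calder\'on--Zygmund / multiplier input: for $p=2$ it is immediate from Plancherel's theorem, but for general $p \in (1,+\infty)$ the Mikhlin--H\"ormander theorem is the essential tool. The only other point needing care is the measurability and norm control of the extended function $U$, which is routine once $E$ is chosen as a single bounded operator independent of $t$.
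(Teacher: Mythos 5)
Your proof is correct and follows essentially the same route as the paper: extend in the bounded variable by a simultaneous extension operator (the paper builds it explicitly via a Taylor polynomial times a cutoff, you invoke Stein's theorem), then identify $W^{2,p}(\RR;L^p(\RR))\cap L^p(\RR;W^{2,p}(\RR))$ with $W^{2,p}(\RR^2)$ by the Mikhlin multiplier theorem, and restrict back to $\RR\times I$. Your explicit treatment of the mixed-derivative symbol $\xi\eta/(1+\xi^2+\eta^2)$ just spells out the step the paper compresses into the citation of Mihlin.
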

\begin{proof} 
Let $\chi \in W^{m,p}(a,b)$. Then $\chi \in C^{m-1}(\left[ a,b\right])$. Consider the Taylor polynomial  
$$
T_{m-1}(x) =\sum_{k=0}^{m-1}\frac{1}{k!}( x-b)^{k}\chi^{(k)}(b), \quad \text{for }x>b.
$$
We define the following bump function $\varphi \in C^{\infty }([b,+\infty))$ such that
$$\begin{array}{lll}
\varphi(x) &=&\left \{ 
\begin{array}{ll}
1 & \text{ for }b\leq x\leq b+1 \\
0 & \text{ for }x\geq b+2.
\end{array}
\right.\end{array}$$
Then $x \longmapsto T_{m-1}( x) \varphi (x)$ belongs to $W^{m,p}(b,+\infty)$. Now, we define an extension function on $(a,+\infty)$ by
$$\tilde{\chi}(x) = \left\{ 
\begin{array}{ll}
\chi(x) & \text{ for }a<x<b \\ 
T_{m-1}(x) \varphi(x) &  \text{ for }x>b.
\end{array}
\right.$$
Clearly, $\tilde{\chi}$ belongs to $W^{m,p}(a,+\infty)$. Moreover, all the derivatives on the left and on the right, up to order $m-1$, coincide. In the same way, we build an extension to the left of point a. Consequently, there exists an extension operator $P$ which maps continuously 
$$L^p(I) \quad \text{into} \quad L^p(\RR) \quad \text{and} \quad W^{2,p}(I)\quad \text{into} \quad W^{2,p}(\RR).$$
Therefore, for
$$\chi \in W^{2,p}(\RR; L^p(I)) \cap L^p(\RR; W^{2,p}(I)),$$
we have
$$P\chi \in W^{2,p}(\RR; L^p(\RR)) \cap L^p(\RR; W^{2,p}(\RR)).$$
This last space coincides with 
$$W^{2,p}(\RR^{2}),$$
by Mihlin's theorem (see Mihlin \cite{mihlin}).

Consequently, $\chi$, the restriction of $P\chi$ to $\RR \times I$, belongs to  $W^{2,p}(\RR \times I)$.
\end{proof}
We take this opportunity to indicate a similar result in the case of a bounded open set of $\RR^n$ with $n>1$.
\begin{Lem}
Let $U$ be an open bounded subset of $\RR^n$, $n > 1$, with a Lipschitz boundary. Then
$$W^{2,p}(\RR; L^p(U)) \cap L^p(\RR; W^{2,p}(U)) \subset W^{2,p}(\RR \times U).$$
\end{Lem}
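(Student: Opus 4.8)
The plan is to follow exactly the scheme of the one-dimensional case treated just above, the only substantial change being the construction of the extension operator, which can no longer be produced by an explicit Taylor polynomial. First I would invoke a \emph{universal} extension operator adapted to the Lipschitz domain $U$. Since $\partial U$ is Lipschitz, Stein's extension theorem provides a single bounded linear operator
$$P : W^{s,p}(U) \longrightarrow W^{s,p}(\RR^n)$$
which is continuous \emph{simultaneously} for every integer $s \geqslant 0$ and satisfies $(P\chi)_{|U} = \chi$. In particular the same operator $P$ is bounded from $L^p(U)$ into $L^p(\RR^n)$ and from $W^{2,p}(U)$ into $W^{2,p}(\RR^n)$, which is precisely what replaces the operator $P$ built by hand in the interval case.

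Next, given
$$\chi \in W^{2,p}(\RR; L^p(U)) \cap L^p(\RR; W^{2,p}(U)),$$
I would let $P$ act on the spatial variable only, setting $(P\chi)(t) = P\bigl(\chi(t,\cdot)\bigr)$ for almost every $t \in \RR$. Because $P$ is a fixed bounded operator acting on $U$ and independent of $t$, it commutes with the time derivatives $\partial_t$ and $\partial_t^2$. Hence its boundedness $L^p(U)\to L^p(\RR^n)$ gives $P\chi \in W^{2,p}(\RR; L^p(\RR^n))$, while its boundedness $W^{2,p}(U)\to W^{2,p}(\RR^n)$ gives $P\chi \in L^p(\RR; W^{2,p}(\RR^n))$, so that
$$P\chi \in W^{2,p}(\RR; L^p(\RR^n)) \cap L^p(\RR; W^{2,p}(\RR^n)).$$

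Exactly as in the one-dimensional argument, this last intersection coincides with $W^{2,p}(\RR^{n+1})$: the pure second-order derivatives $\partial_t^2(P\chi)$ and $\partial_{x_i}\partial_{x_j}(P\chi)$ already lie in $L^p(\RR^{n+1})$, and the remaining mixed derivatives $\partial_t\partial_{x_i}(P\chi)$ are recovered through the Fourier multipliers $\xi_0\xi_i/(1+\xi_0^2+|\xi'|^2)$, which satisfy the Mihlin--Hörmander condition; Mihlin's theorem (Mihlin \cite{mihlin}) then yields $P\chi \in W^{2,p}(\RR^{n+1})$. Restricting $P\chi$ back to $\RR \times U$ and using $(P\chi)_{|U} = \chi$ shows $\chi \in W^{2,p}(\RR \times U)$, which is the desired inclusion.

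The main obstacle is precisely the first step: replacing the elementary Taylor-polynomial extension, valid on an interval, by a single operator bounded on $L^p$ \emph{and} on $W^{2,p}$ of a genuinely $n$-dimensional domain. Here the Lipschitz regularity of $\partial U$ is essential, as it is exactly what guarantees the existence of such a simultaneous extension through Stein's (or Calderón's) construction; once this operator is available, the commutation with $\partial_t$ and the Mihlin multiplier step are routine and identical to the one-dimensional case.
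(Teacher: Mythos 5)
Your proposal is correct and follows essentially the same route as the paper: extend in the spatial variable by a bounded operator $P:L^p(U)\to L^p(\RR^n)$, $W^{2,p}(U)\to W^{2,p}(\RR^n)$ (the paper merely asserts its existence for Lipschitz $U$, where you rightly name Stein's theorem), obtain $P\chi \in W^{2,p}(\RR;L^p(\RR^n))\cap L^p(\RR;W^{2,p}(\RR^n))$, identify this intersection with $W^{2,p}(\RR^{n+1})$ via Mihlin's multiplier theorem, and restrict back to $\RR\times U$. Your added detail on the mixed-derivative multipliers only makes explicit what the paper leaves implicit.
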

\begin{proof}
We know that there exists an extension operator $P$ which maps continuously 
$$L^p(U) \quad \text{into} \quad L^p(\RR^n) \quad \text{and} \quad W^{2,p}(U)\quad \text{into} \quad W^{2,p}(\RR^n);$$
then for 
$$\chi \in W^{2,p}(\RR; L^p(U)) \cap L^p(\RR; W^{2,p}(U)),$$
we have
$$P\chi \in W^{2,p}(\RR; L^p(\RR^n)) \cap L^p(\RR; W^{2,p}(\RR^n)).$$
This last space coincide with 
$$W^{2,p}(\RR^{n+1}),$$
by Mihlin's theorem (see Mihlin \cite{mihlin}).

Consequently, $\chi$, the restriction of $P\chi$ to $\RR \times U$, belongs to  $W^{2,p}(\RR \times U)$.
\end{proof}

Therefore, we deduce that $V_1$ and $V_2$ have the same regularities on $(0,\rho) \times (0,\omega)$, with $\rho \in (0,\rho_0]$.
 
\subsection{Regularities of $v(r,\theta)$ and $w(r,\theta)=v(r,\theta)/r$}

In the sequel, all the regularities of $w$ are deduced easily from those of $v$.

Recall that, from \eqref{V = Phi}, we have
$$V(t)=e^{\nu t}\Phi (t) \quad \text{and} \quad \Phi (t) = e^{-\nu t}V(t).$$
Moreover, since $r = \rho e^{-t}$, we obtain
$$V_1 (t,\theta) = \frac{v(\rho e^{-t},\theta)}{\left(\rho e^{-t}\right)^{\nu+1}},$$
where $\dis\nu = 3 - \frac{2}{p} \in (1,3)$ and
$$V_2 (t,\theta) = \left(r \frac{\partial}{\partial r}\right)^2 \left(\frac{v}{r}\right)(\rho e^{-t},\theta) = (\rho e^{-t})\frac{\partial^2 v}{\partial r^2}(\rho e^{-t},\theta) - \frac{\partial v}{\partial r}(\rho e^{-t},\theta) + \frac{v(\rho e^{-t},\theta)}{\rho e^{-t}}.$$

\subsubsection{Regularity of $V_1$}\label{SubSect First situation}

Here, we explicit the fact that 
\begin{equation}\label{V1 in W2p}
V_1 \in W^{2,p}\left((0,+\infty)\times(0,\omega)\right).
\end{equation}
We have
$$\int_0^{+\infty} \int_0^\omega \left|V_1(t,\theta)\right|^p d\theta~dt = \int_0^{+\infty} \int_0^\omega \left|\frac{v(\rho e^{-t},\theta)}{\rho^{\nu+1} e^{-t(\nu+1)}}\right|^p d\theta~dt < + \infty.$$
Setting $r = \rho e^{-t}$, we obtain
$$\begin{array}{lll}
\dis\int_0^{+\infty} \int_0^\omega \left|\frac{v(\rho e^{-t},\theta)}{\left(\rho e^{-t}\right)^{\nu+1}}\right|^p d\theta~dt & = & \dis\int_0^\rho \int_0^\omega \left|\frac{v\left(r,\theta\right)}{r^{\nu + 1 + \frac{1}{p}}}\right|^p d\theta~dr \\ \ecart

& = & \dis \int_0^\rho \int_0^\omega r^{-4p + 1}\left|v\left(r,\theta\right)\right|^p d\theta~dr.
\end{array}$$
Then, we have
\begin{equation}\label{v in Lp gamma1}
v \in L^p_{\gamma_0}(S_{\omega,\rho}) \quad \text{and} \quad w=\frac{v}{r} \in L^p_{\gamma_0 + 1}(S_{\omega,\rho}),
\end{equation}
where $\gamma_0 = -4 + \dfrac{1}{p}$. Moreover
$$\begin{array}{lll}
\dis\frac{\partial V_1}{\partial t}(t,\theta) &=& \dis \frac{1}{\rho^{\nu + 1}}\frac{\partial}{\partial t}\left(v(\rho e^{-t},\theta) e^{(\nu+1)t}\right) \\ \ecart
& = & \dis\frac{1}{\rho^{\nu + 1}}\left((\nu+1)e^{(\nu+1)t} v(\rho e^{-t},\theta) - \rho e^{\nu t} \frac{\partial v}{\partial r}(\rho e^{-t},\theta)  \right),
\end{array}$$
hence
$$ \frac{1}{\left(\rho e^{- t}\right)^{\nu}} \frac{\partial v}{\partial r}(\rho e^{-t},\theta) = \frac{(\nu+1)}{\left(\rho e^{-t}\right)^{\nu+1}} v(\rho e^{-t},\theta) - \frac{\partial V_1(t,\theta)}{\partial t}.$$
Thus, in virtue of \eqref{V1 in W2p} and \eqref{v in Lp gamma1}, it follows that
$$(t,\theta) \longmapsto \frac{1}{\left(\rho e^{- t}\right)^{\nu}} \frac{\partial v}{\partial r}(\rho e^{-t},\theta) \in L^p((0,+\infty)\times (0,\omega)),$$
and
$$\begin{array}{lll}
\dis\int_0^{+\infty} \int_0^\omega \left|\frac{1}{\left(\rho e^{- t}\right)^{\nu}} \frac{\partial v}{\partial r}(\rho e^{-t},\theta)\right|^p d\theta~dt & = & \dis \int_0^{\rho} \int_0^\omega \left|\frac{1}{r^{\nu + \frac{1}{p}}} \frac{\partial v}{\partial r}(r,\theta)\right|^p d\theta~dr \\ \ecart

& = & \dis \int_0^{\rho} \int_0^\omega r^{-3p + 1} \left| \frac{\partial v}{\partial r}(r,\theta)\right|^p d\theta~dr.
\end{array}$$
So, we obtain
\begin{equation}\label{v' in Lp gamma2}
\frac{\partial v}{\partial r} \in L^p_{\gamma_1}(S_{\omega,\rho}) \quad \text{and} \quad \frac{\partial w}{\partial r} \in L^p_{\gamma_1 + 1}(S_{\omega,\rho}),
\end{equation}
where $\gamma_1 = -3 + \dfrac{1}{p}$. Furthermore, we have
$$\begin{array}{lll}
\dis\frac{\partial^2 V_1}{\partial t^2}(t,\theta) &=& \dis \frac{\nu+1}{\rho^{\nu + 1}}\frac{\partial}{\partial t}\left(v(\rho e^{-t},\theta) e^{(\nu+1)t}\right) - \frac{\nu}{\rho^{\nu}} e^{\nu t} \frac{\partial v}{\partial r}(\rho e^{-t},\theta) + \frac{\rho}{\rho^{\nu}} e^{-t}e^{\nu t} \frac{\partial^2 v}{\partial r^2}(\rho e^{-t},\theta)  \\ \\
& = & \dis\frac{\nu+1}{\rho^{\nu + 1}}\left((\nu+1)e^{(\nu+1)t} v(\rho e^{-t},\theta) - \rho e^{\nu t} \frac{\partial v}{\partial r}(\rho e^{-t},\theta)  \right) - \frac{\nu}{\rho^{\nu}} e^{\nu t} \frac{\partial v}{\partial r}(\rho e^{-t},\theta) \\ \ecart
&&\dis + \frac{1}{\rho^{\nu -1}} e^{(\nu - 1)t} \frac{\partial^2 v}{\partial r^2}(\rho e^{-t},\theta) \\ \\

& = & \dis \frac{(\nu+1)^2}{\left(\rho e^{-t}\right)^{\nu + 1}}  v(\rho e^{-t},\theta) - \frac{2\nu + 1}{\left(\rho e^{- t}\right)^{\nu}}  \frac{\partial v}{\partial r}(\rho e^{-t},\theta) + \frac{1}{\left(\rho e^{-t}\right)^{\nu-1}}  \frac{\partial^2 v}{\partial r^2}(\rho e^{-t},\theta),
\end{array}$$
hence
$$\frac{1}{\left(\rho e^{-t}\right)^{\nu-1}}  \frac{\partial^2 v}{\partial r^2}(\rho e^{-t},\theta) = \frac{\partial^2 V_1}{\partial t^2}(t,\theta) + \frac{2\nu + 1}{\left(\rho e^{- t}\right)^{\nu}}  \frac{\partial v}{\partial r}(\rho e^{-t},\theta) - \frac{(\nu+1)^2}{\left(\rho e^{-t}\right)^{\nu + 1}}  v(\rho e^{-t},\theta).$$
Thus, in virtue of \eqref{V1 in W2p}, \eqref{v in Lp gamma1} and \eqref{v' in Lp gamma2}, it follows that
$$(t,\theta) \longmapsto \frac{1}{\left(\rho e^{- t}\right)^{\nu - 1}} \frac{\partial^2 v}{\partial r^2}(\rho e^{-t},\theta) \in L^p((0,+\infty)\times (0,\omega)),$$
and
$$\begin{array}{lll}
\dis\int_0^{+\infty} \int_0^\omega \left|\frac{1}{\left(\rho e^{- t}\right)^{\nu -1}} \frac{\partial^2 v}{\partial r^2}(\rho e^{-t},\theta)\right|^p d\theta~dt & = & \dis \int_0^{\rho} \int_0^\omega \left|\frac{1}{r^{\nu - 1 + \frac{1}{p}}} \frac{\partial^2 v}{\partial r^2}(r,\theta)\right|^p d\theta~dr \\ \ecart

& = & \dis \int_0^{\rho} \int_0^\omega r^{-2p + 1} \left| \frac{\partial^2 v}{\partial r^2}(r,\theta)\right|^p d\theta~dr.
\end{array}$$
So, we obtain
\begin{equation*}
\frac{\partial^2 v}{\partial r^2} \in L^p_{\gamma_2}(S_{\omega,\rho}) \quad \text{and} \quad \frac{\partial^2 w}{\partial r^2} \in L^p_{\gamma_2 + 1}(S_{\omega,\rho}),
\end{equation*}
where $\gamma_2 = -2 + \dfrac{1}{p}$.

On the other hand
$$\frac{\partial V_1}{\partial \theta} (\rho e^{-t}, \theta) = \frac{1}{\left(\rho e^{-t} \right)^{\nu + 1}}\frac{\partial v}{\partial \theta} (\rho e^{-t}, \theta).$$
Thus, it follows that
$$\begin{array}{lll}
\dis\int_0^{+\infty} \int_0^\omega \left| \frac{\partial V_1}{\partial \theta}(\rho e^{-t},\theta)\right|^p d\theta~dt & = & \dis \int_0^{+\infty} \int_0^\omega \frac{1}{\left(\rho e^{- t}\right)^{p(\nu + 1)}} \left|\frac{\partial v}{\partial \theta}(\rho e^{-t},\theta)\right|^p d\theta~dt \\ \ecart

& = & \dis \int_0^{\rho} \int_0^\omega r^{-4p+1} \left|\frac{\partial v}{\partial \theta}(r,\theta)\right|^p d\theta~dr.
\end{array}$$
So, we obtain
\begin{equation*}
\frac{\partial v}{\partial \theta} \in L^p_{\gamma_0}(S_{\omega,\rho}) \quad \text{and} \quad \frac{\partial w}{\partial \theta} \in L^p_{\gamma_0 + 1}(S_{\omega,\rho}).
\end{equation*}
In the same way, we deduce that
$$\frac{\partial^2 v}{\partial \theta^2} \in L^p_{\gamma_0}(S_{\omega,\rho}) \quad \text{and} \quad \frac{\partial^2 v}{\partial r \partial \theta} \in L^p_{\gamma_1}(S_{\omega,\rho}),$$
hence
$$\frac{\partial^2 w}{\partial \theta^2} \in L^p_{\gamma_0+1}(S_{\omega,\rho}) \quad \text{and} \quad \frac{\partial^2 w}{\partial r \partial \theta} \in L^p_{\gamma_1+1}(S_{\omega,\rho}).$$
Now, we explicit the fact that  
$$\frac{\partial^3 V_1}{\partial t \partial\theta^2}, \frac{\partial^3 V_1}{\partial t^2\partial\theta}, \frac{\partial^4 V_1}{\partial t^2\partial\theta^2} \in L^p \left((0,+\infty)\times (0,\omega)\right).$$ 
We have 
$$\frac{\partial^3 V_1}{\partial t^2 \partial \theta}(t,\theta) = \frac{(\nu+1)^2}{\left(\rho e^{-t}\right)^{\nu + 1}}  \frac{\partial v}{\partial \theta}(\rho e^{-t},\theta) - \frac{2\nu + 1}{\left(\rho e^{- t}\right)^{\nu}}  \frac{\partial^2 v}{\partial r \partial\theta}(\rho e^{-t},\theta) + \frac{1}{\left(\rho e^{-t}\right)^{\nu-1}}  \frac{\partial^3 v}{\partial r^2 \partial \theta}(\rho e^{-t},\theta),$$
hence
$$\frac{1}{\left(\rho e^{-t}\right)^{\nu-1}}  \frac{\partial^3 v}{\partial r^2 \partial \theta}(\rho e^{-t},\theta) = \frac{\partial^3 V_1}{\partial t^2 \partial \theta}(t,\theta) - \frac{(\nu+1)^2}{\left(\rho e^{-t}\right)^{\nu + 1}}  \frac{\partial v}{\partial \theta}(\rho e^{-t},\theta) + \frac{2\nu + 1}{\left(\rho e^{- t}\right)^{\nu}}  \frac{\partial^2 v}{\partial r \partial\theta}(\rho e^{-t},\theta).$$
Thus, we obtain
$$\frac{\partial^3 v}{\partial r^2 \partial \theta} \in L^p_{\gamma_2}(S_{\omega,\rho})\quad \text{and} \quad \frac{\partial^3 w}{\partial r^2 \partial \theta} \in L^p_{\gamma_2 + 1}(S_{\omega,\rho}),$$
and in the same way, we also have 
$$\frac{\partial^3 v}{\partial r \partial\theta^2} \in L^p_{\gamma_1}(S_{\omega,\rho}) \quad \text{and} \quad \frac{\partial^4 v}{\partial r^2 \partial \theta^2} \in L^p_{\gamma_2}(S_{\omega,\rho}),$$
hence
$$\frac{\partial^3 w}{\partial r \partial\theta^2} \in L^p_{\gamma_1+1}(S_{\omega,\rho}) \quad \text{and} \quad \frac{\partial^4 w}{\partial r^2 \partial \theta^2} \in L^p_{\gamma_2+1}(S_{\omega,\rho}).$$
Now, we explicit the fact that
$$V_1 \in L^p\left((0,+\infty);W^{4,p}(0,\omega)\right),$$
that is, for all $i = 1,2,3,4$
$$\int_0^{+\infty} \left\|\frac{\partial^i V_1}{\partial \theta^i}(t,\theta)\right\|^p_{L^p(0,\omega)} dt < +\infty.$$
Then, we have
$$\begin{array}{lll}
\dis\int_0^{+\infty} \int_0^\omega \left|\frac{\partial^i V_1}{\partial \theta^i}(t,\theta)\right|^p d\theta~dt & = &\dis\int_0^{+\infty} \int_0^\omega \left|\frac{1}{\left(\rho e^{-t}\right)^{\nu+1}}\frac{\partial^i v}{\partial\theta^i}(\rho e^{-t},\theta)\right|^p d\theta~dt \\ \ecart

& = & \dis \int_0^\rho \int_0^\omega r^{-4p + 1}\left|\frac{\partial^i v}{\partial\theta^i}\left(r,\theta\right)\right|^p d\theta~dr,
\end{array}$$
which gives
$$\frac{\partial^i v}{\partial\theta^i} \in L^p_{\gamma_0}(S_{\omega,\rho}) \quad \text{and} \quad \frac{\partial^i w}{\partial \theta^i} \in L^p_{\gamma_0 + 1}(S_{\omega,\rho}), \quad \text{for }i = 1,2,3,4.$$

\subsubsection{Regularity of $V_2$}\label{Sect Reg V2}

In the same way, we explicit the fact that
$$V_2 \in W^{2,p}\left((0,+\infty)\times (0,\omega)\right),$$
where 
$$V_2 (t,\theta) = \left(r \frac{\partial}{\partial r}\right)^2 \left(\frac{v}{r}\right)(\rho e^{-t},\theta) = \rho e^{-t}\frac{\partial^2 v}{\partial r^2}(\rho e^{-t},\theta) - \frac{\partial v}{\partial r}(\rho e^{-t},\theta) + \frac{v(\rho e^{-t},\theta)}{\rho e^{-t}}.$$
It is clear, from Subsection~\ref{SubSect First situation}, that
$$r\frac{\partial^2 v}{\partial r^2},~\frac{\partial v}{\partial r},~\frac{v}{r} \in L^p_{\gamma_1}(S_{\omega,\rho}).$$
Moreover, we have
$$\frac{\partial V_2}{\partial t}(t,\theta) = -\left(\rho e^{-t}\right)^2 \frac{\partial^3 v}{\partial r^3} (\rho e^{-t}, \theta) - \frac{\partial v}{\partial r} (\rho e^{-t}, \theta) + \frac{v(\rho e^{-t},\theta)}{\rho e^{-t}},$$
hence
$$\left(\rho e^{-t}\right)^2 \frac{\partial^3 v}{\partial r^3} (\rho e^{-t}, \theta) = - \frac{\partial v}{\partial r} (\rho e^{-t}, \theta) + \frac{v(\rho e^{-t},\theta)}{\rho e^{-t}} - \frac{\partial V_2}{\partial t}(t,\theta).$$
Then
$$\begin{array}{lll}
\dis\int_0^{+\infty} \int_0^\omega \left| \frac{\partial v}{\partial r} (\rho e^{-t}, \theta) \right|^p d\theta~dt & = &\dis\int_0^{\rho} \int_0^\omega \frac{r^{3p - 1}}{r}r^{-3p+1}\left|\frac{\partial v}{\partial r}(r,\theta)\right|^p d\theta~dr \\ \ecart

& \leqslant & \dis \rho^{3p - 2} \int_0^\rho \int_0^\omega  r^{-3p+1}\left|\frac{\partial v}{\partial r}(r,\theta)\right|^p d\theta~dr < + \infty.
\end{array}$$
It follows that
$$\int_0^{+\infty} \int_0^\omega \left|\left(\rho e^{-t}\right)^2 \frac{\partial^3 v}{\partial r^3}(\rho e^{-t},\theta)\right|^p d\theta~dt = \int_0^{\rho} \int_0^\omega r^{2p - 1}\left|\frac{\partial^3 v}{\partial r^3}(r,\theta)\right|^p d\theta~dr < + \infty,$$
which means that 
\begin{equation*}
\frac{\partial^3 v}{\partial r^3} \in L^p_{\gamma_3}(S_{\omega,\rho}),\quad \text{and} \quad \frac{\partial^3 w}{\partial r^3} \in L^p_{\gamma_3 + 1}(S_{\omega,\rho}),
\end{equation*}
where $\gamma_3 = 2 - \dfrac{1}{p}$.

In the same way, we have
\begin{equation*}
\frac{\partial^4 v}{\partial r \partial \theta^3} \in L^p_{\gamma_1}(S_{\omega,\rho}) \quad \text{and} \quad \frac{\partial^4 v}{\partial r^3 \partial \theta} \in L^p_{\gamma_3}(S_{\omega,\rho}),
\end{equation*}
hence
$$\frac{\partial^4 w}{\partial r \partial \theta^3} \in L^p_{\gamma_1+1}(S_{\omega,\rho}) \quad \text{and} \quad \frac{\partial^4 w}{\partial r^3 \partial \theta} \in L^p_{\gamma_3+1}(S_{\omega,\rho}).$$
Furthermore, we have
$$\begin{array}{lll}
\dis\frac{\partial^2 V_2}{\partial t^2}(t,\theta) & = & \dis \left(\rho e^{-t} \right)^3 \frac{\partial^4 v}{\partial r^4} (\rho e^{-t}, \theta) + 2\left(\rho e^{-t}\right)^2 \frac{\partial^3 v}{\partial r^3} (\rho e^{-t}, \theta) + \rho e^{-t} \frac{\partial^2 v}{\partial r^2} (\rho e^{-t}, \theta) \\ \ecart
&& \dis - \frac{\partial v}{\partial r} (\rho e^{-t}, \theta) - \frac{v(\rho e^{-t}, \theta)}{\rho e^{-t}}.
\end{array}$$
Since
$$\begin{array}{lll}
\dis \left(\rho e^{-t} \right)^3 \frac{\partial^4 v}{\partial r^4} (\rho e^{-t}, \theta) & = & \dis - 2\left(\rho e^{-t}\right)^2 \frac{\partial^3 v}{\partial r^3} (\rho e^{-t}, \theta) - \rho e^{-t} \frac{\partial^2 v}{\partial r^2} (\rho e^{-t}, \theta) \\ \ecart
&& \dis + \frac{\partial v}{\partial r} (\rho e^{-t}, \theta) + \frac{v(\rho e^{-t}, \theta)}{\rho e^{-t}} + \frac{\partial^2 V_2}{\partial t^2}(t,\theta),
\end{array}$$
and
$$\begin{array}{lll}
\dis\int_0^{+\infty} \int_0^\omega \left| \rho e^{-t} \frac{\partial^2 v}{\partial r^2} (\rho e^{-t}, \theta) \right|^p d\theta~dt & = &\dis\int_0^{\rho} \int_0^\omega \frac{r^{3p - 1}}{r}r^{-3p+1}\left|r \frac{\partial^2 v}{\partial r^2}(r,\theta)\right|^p d\theta~dr \\ \ecart

& = & \dis \int_0^\rho \int_0^\omega r^{3p - 2} r^{-3p+1}\left|r \frac{\partial^2 v}{\partial r^2}(r,\theta)\right|^p d\theta~dr \\ \ecart

& \leqslant & \dis \rho^{3p - 2} \int_0^\rho \int_0^\omega  r^{-3p+1}\left|r \frac{\partial^2 v}{\partial r^2}(r,\theta)\right|^p d\theta~dr < + \infty,
\end{array}$$
we deduce that
$$\int_0^{+\infty} \int_0^\omega \left| \left(\rho e^{-t}\right)^3 \frac{\partial^4 v}{\partial r^4} (\rho e^{-t}, \theta) \right|^p d\theta~dt = \int_0^{\rho} \int_0^\omega r^{3p - 1}\left|\frac{\partial^4 v}{\partial r^4}(r,\theta)\right|^p d\theta~dr < + \infty.$$
Therefore
$$\frac{\partial^4 v}{\partial r^4} \in L^p_{\gamma_4}(S_{\omega,\rho}) \quad \text{and} \quad \frac{\partial^4 w}{\partial r^4} \in L^p_{\gamma_4+1}(S_{\omega,\rho}),$$
where $\gamma_4 = 3 - \dfrac{1}{p}$.

It is clear now that the appropriated weight function, cited in \eqref{Def Wmp}, is $r \longmapsto r^\gamma := r^{3-\frac{1}{p}}$.

\section*{Acknowledgments} 

We would like to thank the referee for its valuable comments and corrections which have helped us to improve this paper.
 
\section*{Conflict of interest}

On behalf of all authors, the corresponding author states that there is no conflict of interest.

\end{document}